\newtheorem{thm}{Theorem}[section]
\newtheorem{lem}[thm]{Lemma}
\newtheorem{cor}[thm]{Corollary}
\theoremstyle{remark}
\newtheorem{rem}[thm]{Remark}
\theoremstyle{definition}
\newcommand{\isomto}{\overset{\sim}{\rightarrow}}
\title{Effective convergence bounds for Frobenius structures on connections}
\author{Kiran S. Kedlaya \and Jan Tuitman}
\begin{document}
\maketitle

\begin{abstract}
Consider a meromorphic connection on $\mathbb{P}^1$ over a $p$-adic field. In many cases, such as those
arising from Picard-Fuchs equations or Gauss-Manin connections, this connection admits a Frobenius structure
defined over a suitable rigid analytic subspace. We give an effective convergence bound for this Frobenius
structure by studying the effect of changing the Frobenius lift. We also give an example indicating 
that our bound is optimal.
\end{abstract}

\section{Introduction}

In recent years, much work has gone into using $p$-adic cohomology as an effective
tool for numerical computation of zeta functions (and some related quantities) of algebraic varieties 
over finite fields. One important technique in this field is the \emph{deformation method}
of Lauder, in which one computes the zeta function of a variety by fitting it into a one-parameter
family of varieties, constructing the associated Picard-Fuchs equation or Gauss-Manin connection,
then exploiting the existence of a Frobenius structure on this differential equation to reduce the problem
to another member of the family (the initial condition, so to speak). Another important technique
is the \emph{fibration method}, also introduced by Lauder, in which one uses similar techniques to compute 
the zeta function of the total space of a one-parameter family, again starting from a single fiber. 
See \cite{laudeform,laufibration} for further discussion.

To execute the deformation and fibration methods in practice, it is necessary to have not just the existence
of a Frobenius structure, but explicit bounds on its convergence within a given residue disc. 
Concretely, these bounds are needed to enable the reconstruction of a rational function from a power series
expansion, by bounding the degrees of its zero and pole divisors.
One can often obtain crude bounds by direct calculations, but it is essential to have more accurate bounds in 
order to limit the required intermediate precision needed to achieve a final result to a given level of accuracy.

A technique for obtaining accurate bounds has been  suggested by the first author in the preprint
\cite{kedmainz}, under the assumptions (satisfied in many cases in practice) that the differential equation
has at most one singularity in any residue disc, and that the exponents of the local
monodromy at such a singularity are $p$-adically integral. The idea is to exploit the parallel
transport of Frobenius structures between two choices of a Frobenius lift, to reduce the question of
convergence within a given residue disc to the same question with the Frobenius lift centered around the
singularity in the disc, a question which can be solved rather easily.

However, the bound given in \cite[Theorem~6.5.10]{kedmainz} is not best possible. A stronger bound was claimed 
in the original (2008) manuscript of \cite{kedmainz}, but the second author discovered that the proof 
was incomplete, as it relied on some unjustified assertions about the convergence of solutions of $p$-adic 
differential equations. In this paper, we give a corrected version of the original argument,
thus giving a stronger version of \cite[Theorem~6.5.10]{kedmainz}. We also provide a numerical example 
which indicate that the resulting bound is sharp.

\section{The theorem}

We first introduce some notation and terminology.

Let $p$ denote a prime, $n$ a positive integer, and $\mathbb{F}_q$ the finite field with $q=p^n$ elements.
We write $\mathbb{Q}_q$ for the unique unramified extension of degree $n$ of the field of $p$-adic numbers
$\mathbb{Q}_p$, and $\mathbb{Z}_q$ for the ring of integers of $\mathbb{Q}_q$. 
Let $U$ be an open dense subscheme of $\mathbb{P}^1_{\mathbb{Q}_q}$ with nonempty
complement $Z$. Suppose that $\mathcal{E}$ is a vector bundle on $U$ equipped with 
a connection $\nabla$, and let $t$ denote some coordinate on $\mathbb{P}^1_{\mathbb{Q}_q}$. 

We write
$\sigma$ for
the standard $p$-th power Frobenius lift on $\mathbb{P}^1_{\mathbb{Q}_q}$ that is, the (semilinear) map 
that lifts the $p$-th power Frobenius map on $\mathbb{P}^1_{\mathbb{F}_q}$ and
satisfies $\sigma(t)=t^p$. 

Let $V$ denote the rigid analytic subspace of $\mathbb{P}^1_{\mathbb{Q}_q}$ which is the complement
of the union of the open disks of radius $1$ around the points of $Z$, and 
$\mathcal{O}^{\dagger}(U)$ 
the ring of functions that converge on some strict neighbourhood of $V$. A \textit{Frobenius structure} 
on $\mathcal{E}$ with respect to $\sigma$ is an isomorphism $\mathcal{F}:\sigma^* \mathcal{E} \isomto \mathcal{E}$ 
of vector bundles with connection defined on some strict neighbourhood of $V$. 

We fix a basis $[v_1,\ldots,v_r]$ of sections of $\mathcal{E}$ on $U$, define matrices $N \in M_r(\mathcal{O}(U))$ and 
$\Phi \in M_r(\mathcal{O}^{\dag}(U))$, such that
\begin{align*}
\nabla v_j &= \sum_{i=1}^r N_{ij} v_i \otimes dt, \\
\mathcal{F} v_j &= \sum_{i=1}^r \Phi_{ij} v_i,
\end{align*}
and call these the matrices of $\nabla$ and $\mathcal{F}$. Note, however, that $\nabla$ and $\mathcal{F}$ are not 
$\mathcal{O}(U)$- and $\mathcal{O}^{\dag}(U)$-linear, respectively. Instead, $\nabla$ satisfies the Leibniz rule, and $\mathcal{F}$ is
$\sigma$-semilinear as a map from $\mathcal{E}$ to itself.

Since $\mathcal{F}$ is a morphism of vector bundles with connection, it is horizontal with respect to $\nabla$. This implies
that the matrices $N$ and $\Phi$ satisfy the differential equation
\begin{align} \label{frobeq}
N\Phi+\frac{d\Phi}{dt}= \frac{d\sigma(t)}{dt}\Phi \sigma(N) = pt^{p-1} \Phi \sigma(N).
\end{align}

Now let $z$ be a geometric point of $Z$, and suppose that the entries of $N$ have at most a simple pole at $z$. When $\nabla$ is
a Gauss-Manin connection, by the regularity theorem we can always choose the basis $[v_1,\ldots,v_r]$ so that this is the case 
(where the choice will in general depend on $z$). The \textit{exponents} of $\nabla$ at $z$ with respect to $[v_1,\ldots,v_r]$ 
are defined as the eigenvalues of the matrix $(t-z)N|_{t=z}$. When $\nabla$ is a Gauss-Manin connection or admits a Frobenius 
structure, these are known to be rational numbers. 

Let $|.|$ denote the norm on $\mathcal{O}^{\dag}(U)$ induced by the supremum norm on $V$, and $v_p(.)$ the corresponding
discrete valuation, so that $|.|=p^{-v_p(.)}$. Extend both of these to $M_r(\mathcal{O}^{\dag}(U))$ in the usual way, 
i.e as the maximum and minimum over the entries, respectively.

\begin{thm} \label{frobexp}

Let $z$ be an unramified geometric point of $Z$, and assume that $Z$ does not contain any 
other points with the same reduction modulo $p$. Suppose that $[v_1,\ldots,v_r]$ 
is a basis of $\mathcal{E}$ with respect to which the matrix $N$ of $\nabla$ has 
at most a simple pole at $z$, and the exponents $\{\lambda_1,\ldots,\lambda_r\}$ 
of $\nabla$ at $z$
are contained in $\mathbb{Q} \cap \mathbb{Z}_p$. Assume that $\mathcal{E}$ 
admits a Frobenius structure $\mathcal{F}$ with respect to $\sigma$, and let $\Phi$ be the 
matrix of $\mathcal{F}$ with respect to the basis $[v_1,\ldots,v_r]$. For $i \in \mathbb{N}$,
put
\begin{align*}
f(i) &= \max\{(v_p(\Phi)+v_p(\Phi^{-1})) \lceil \log_p(i) \rceil, (r-1)v_p(N)+(v_p(\Phi)+v_p(\Phi^{-1}))\lfloor \log_p(i) \rfloor \},
\end{align*}
and define 
\begin{align*}
c = \begin{cases}
0 & \mbox{if $v_p(N) \geq 0$} \\
\min\{0, i + f(i): i \in \mathbb{N}\} & \mbox{if $v_p(N) < 0$}.
\end{cases}
\end{align*}
For $m \in \mathbb{N}$, put
\begin{align*}
g(m) &= \max \{i \in \mathbb{N} \; | \; i+ v_p(\Phi) + c + f(i)  < m \},
\end{align*}
and define
\begin{align*}
\alpha_1 &= \lfloor -p \min_i \{ \lambda_i \} + \max_{i} \{\lambda_i\} \rfloor, \\ 
\alpha_2 &=  \left \{ 
         \begin{array}{cl}
         0  & \mbox{if $N$ does not have a pole at $z$},  \\
         0  & \mbox{if $z \in \{0,\infty \}$}, \\
         g(m) & \mbox{otherwise}.
         \end{array}
         \right. 
\end{align*}
Then $\Phi$ is congruent modulo $p^{m}$ to a matrix of rational functions of order greater 
than or equal to $-(\alpha_1+p \alpha_2)$ at $z$ (that is, the entries of the difference between 
the two matrices all have $p$-adic valuation at least $m$). 
\end{thm}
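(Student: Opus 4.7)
The strategy, foreshadowed in the introduction, is to compare $\mathcal{F}$ to a Frobenius structure $\mathcal{F}'$ for a second Frobenius lift $\sigma'$ adapted to $z$. Since $z$ is unramified and no other point of $Z$ reduces to $\bar z$, there exists a lift $\sigma'$ on a strict neighbourhood of the residue disc at $z$ satisfying $\sigma'(t-z) = (t-z)^p$. Horizontal transport of Frobenius structures on the overlap of this neighbourhood with a strict neighbourhood of $V$ yields a Frobenius structure $\mathcal{F}'$ with respect to $\sigma'$; let $\Phi'$ be its matrix. Plugging $\sigma'$ into the analogue of~\eqref{frobeq} and diagonalising the leading coefficient of $(t-z)N$, the exponents of $\Phi'$ at $z$ turn out to be combinations of the form $p\lambda_j - \lambda_i$. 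These are $p$-adic integers (since $\lambda_\bullet \in \mathbb{Z}_p$) and can, modulo $p^m$, be replaced by honest integers; after this approximation $(t-z)^{\alpha_1}\Phi'$ becomes regular at $z$, giving the $\alpha_1$ summand in the final bound.

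\textbf{The change-of-Frobenius Taylor series.} On the overlap of the domains of $\mathcal{F}$ and $\mathcal{F}'$, the composition $\mathcal{F}\circ(\mathcal{F}')^{-1}:\mathcal{E}\to\mathcal{E}$, realised through the canonical horizontal identification of $(\sigma')^{\ast}\mathcal{E}$ with $\sigma^{\ast}\mathcal{E}$, is computed by a universal Taylor series in the difference of coordinates. Translated into matrices this gives an identity of the schematic form
\[
\Phi \;=\; \Phi' \cdot \sum_{i=0}^{\infty} \frac{(\sigma(t) - \sigma'(t))^i}{i!}\, N^{[i]},
\]
where $N^{[i]}$ denotes the $i$-fold iteration of $d/dt+N$ applied to the identity matrix and evaluated at $\sigma'(t)$; the crucial point is that evaluation at $\sigma'(t) = z^\sigma + (t-z)^p$ multiplies the pole order of any rational function at $z$ by a factor of $p$. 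Because $z^\sigma \equiv z^p \pmod p$ for $z$ unramified, one also has $v_p(\sigma(t)-\sigma'(t)) \geq 1$ on the residue disc around $z$.

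\textbf{Convergence estimate and truncation.} An induction on $i$ via~\eqref{frobeq} and the Leibniz rule, combined with the hypothesized bounds on $v_p(N),v_p(\Phi),v_p(\Phi^{-1})$ and the standard identity $v_p(i!) = (i-s_p(i))/(p-1)$, yields a lower estimate of the form
\[
v_p\!\left( \frac{1}{i!}(\sigma(t)-\sigma'(t))^i N^{[i]} \right) \;\geq\; i + c + f(i).
\]
The linear term $i$ arises from $v_p((\sigma(t)-\sigma'(t))^i)\geq i$ combined with $v_p(1/i!)$; the function $f(i)$ interpolates between a regime where the dominant $p$-adic loss comes from $v_p(i!)$ (the $\lceil\log_p i\rceil$ branch) and one where it comes from iterated amplification of the pole of $N$ (the $(r-1)v_p(N)$ branch), which explains the $\max$ in the definition of $f$; the constant $c$ absorbs the finite infimum arising only when $v_p(N)<0$. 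Adding $v_p(\Phi)$ from the left multiplication by $\Phi'$, the defining inequality of $g(m)$ ensures that every tail term with $i > g(m)$ has valuation at least $m$, so $\Phi$ is congruent modulo $p^m$ to the truncation at $i = g(m)$.

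\textbf{Pole bounds and main obstacle.} The truncation is a product of a mod-$p^m$ rational approximation to $\Phi'$, with pole at $z$ of order at most $\alpha_1$, and a polynomial of degree $g(m)$ in $\sigma(t)-\sigma'(t)$ whose coefficients are the matrices $N^{[i]}(\sigma'(t))$. Since $N^{[i]}$ has pole of order at most $i$ at $z$ and precomposition with $\sigma'$ multiplies pole orders by $p$, the $i$-th term contributes a pole of order at most $p i$ at $z$; summing up to $i = g(m) = \alpha_2$ yields total pole of order at most $\alpha_1+p\alpha_2$. The special cases in the definition of $\alpha_2$ are immediate: when $N$ has no pole at $z$ we may take $\alpha_1 = 0$ and the series collapses; when $z\in\{0,\infty\}$ one can arrange $\sigma' = \sigma$ and only the $i=0$ term survives. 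The main technical obstacle is the convergence estimate giving $f(i)$: handling uniformly the two competing regimes of $p$-adic loss in iterated application of the differential operator to $\Phi$ is precisely the subtlety about convergence of solutions of $p$-adic differential equations that was conflated in the original manuscript of \cite{kedmainz}, and whose careful treatment the present paper supplies.
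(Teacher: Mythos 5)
Your overall architecture matches the paper: introduce a second Frobenius lift $\sigma'$ with $\sigma'(t-z)=(t-z)^p$, transport the Frobenius structure to get $\Phi'$, bound the pole of $\Phi'$ at $z$ (this is essentially Lemma~\ref{semstabcor}), and then pass back to $\Phi$ via the change-of-lift Taylor series (Lemma~\ref{transf}), truncating at $i=g(m)$ and bounding the pole growth by $pi$ per term. That skeleton is correct. However there are two genuine gaps, the first of which is the crux of the theorem.

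First, you never prove the bound $v_p(\Delta^{(i)}) \geq f(i)$, and your sketch of why it should hold is in fact incorrect. You attribute the $\lceil \log_p i\rceil$ branch of $f$ to $v_p(i!)$ and the $(r-1)v_p(N)$ branch to ``iterated amplification of the pole of $N$'', and claim the estimate follows by ``an induction on $i$ via~\eqref{frobeq} and the Leibniz rule''. A direct induction of that kind loses $v_p(N)$ at each step and loses $v_p(1/i!) \approx -i/(p-1)$ overall, so it yields a bound that decays \emph{linearly} in $i$, not logarithmically, and this would destroy the truncation argument entirely (the series for $\Phi$ would not even visibly converge). The only-logarithmic loss encoded in $f(i)$ is exactly the subtle convergence fact whose absence made the original argument in \cite{kedmainz} incomplete. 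The paper's Lemma~\ref{lemDi/i!} supplies it by a different route: it interprets the columns of $\sum_i (-t+\eta)^i \Delta^{(i)}$ as horizontal sections at a generic point $\eta$ and invokes the convergence bounds of \cite[Theorem~18.3.3 and Remark~18.3.4]{kedbook} for horizontal sections on a disc free of singularities (the existence of the Frobenius structure is what makes those bounds apply). Without this, your ``Convergence estimate and truncation'' step is an assertion, not a proof, and you even flag it as ``the main technical obstacle'' at the end -- which is precisely the point the paper exists to fill in.

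Second, the constant $c$ does not appear where you put it. In the paper, the $i$-th summand is bounded below by $i + v_p(\Phi') + f(i)$, and $c$ enters through the auxiliary inequality $v_p(\Phi') \geq v_p(\Phi) + c$, which is established by running Lemma~\ref{transf} in the reverse direction ($\Phi' = \sum_i \frac{p^i}{i!}(-u)^i\Phi\,\sigma(i!\Delta^{(i)})$) and invoking Lemma~\ref{lemDi/i!} once more when $v_p(N)<0$ (and directly noting nonnegativity when $v_p(N)\geq 0$). You absorb $c$ into the termwise estimate without any derivation, so this link in the chain is also missing.

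A smaller confusion: you describe $\Phi'$ as having ``exponents'' $p\lambda_j-\lambda_i$ that must be ``replaced modulo $p^m$ by honest integers'' before $(t-z)^{\alpha_1}\Phi'$ becomes regular. In fact Lemma~\ref{semstabcor} gives directly that the Laurent coefficients $\Phi'_i$ vanish for $i < p\min_j\lambda_j - \max_j\lambda_j$, so $\Phi'$ itself (not an approximation) has order $\geq -\alpha_1$ at $z$; no mod-$p^m$ replacement of exponents is needed or meaningful here.
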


The proof proceeds in several steps. We start with the following lemma.

\begin{lem} \label{semstab}
Let $N=\sum_{i=-1}^\infty N_i t^i$ be an $r \times r$ matrix such that $tN$ converges on the
open unit disk and $N_{-1}$ is a nilpotent matrix. Let $\Phi=\sum_{i=-\infty}^{\infty} \Phi_i t^i$ 
be an $r \times r$ matrix that converges on some open annulus of outer radius $1$. Suppose that
$N,\Phi$ satisfy equation (\ref{frobeq}). Then $\Phi_i=0$ for all $i<0$, so that $\Phi$ converges
on the whole open unit disk.
\end{lem}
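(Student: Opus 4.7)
My plan is to compare $\Phi$ with a formal fundamental solution $U$ of the connection equation $U' + NU = 0$ near $t = 0$. Since $N_{-1}$ is nilpotent, the operator $L_i : X \mapsto iX + [N_{-1}, X]$ on $M_r$ has all eigenvalues equal to $i$ (because $\operatorname{ad}(N_{-1})$ is nilpotent on $M_r$), and is therefore invertible for every $i \geq 1$. Solving the resulting recursion determines a unique matrix $H \in M_r(\mathbb{Q}_q[[t]])$ with $H_0 = I$ for which $U := H \cdot t^{-N_{-1}}$ satisfies $U' + NU = 0$ formally. The standard $p$-adic transfer theorem for regular singular connections with exponents in $\mathbb{Z}_p$ (which applies here since all exponents are $0$ and $tN$ is analytic on the open unit disk) then guarantees that $H$ in fact converges on the entire open unit disk, so $U$ is a bona fide matrix of analytic functions with log-polynomial coefficients on that disk.

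Next I would set $C := U^{-1}\Phi\sigma(U)$ and compute $dC/dt$ directly, using $U' = -NU$, the analogous identity $\sigma(U)' = -pt^{p-1}\sigma(N)\sigma(U)$, and the Frobenius equation (\ref{frobeq}). All the terms cancel, yielding $dC/dt = 0$ on the annulus on which $\Phi$ converges. The kernel of $d/dt$ acting on matrices of analytic functions on a connected annulus equipped with polynomial $\log t$ coefficients is just $M_r(\mathbb{Q}_q)$ (one peels off the powers of $\log t$ from the top down, noting that a primitive of $c/t$ cannot be analytic unless $c = 0$), so $C$ must be a constant matrix.

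This furnishes the identity
\begin{align*}
\Phi \;=\; H\cdot\bigl(t^{-N_{-1}}\, C\, t^{p\sigma(N_{-1})}\bigr)\cdot \sigma(H)^{-1}
\end{align*}
on the annulus. The outer factors $H$ and $\sigma(H)^{-1}$ are analytic matrix functions on the open unit disk, and the middle factor is a polynomial in $\log t$ with constant matrix coefficients. So the right-hand side extends $\Phi$ to an expression on the open unit disk with log-polynomial $t$-coefficients; since $\Phi$ itself has no $\log t$ terms, the higher powers of $\log t$ must cancel and $\Phi$ extends to an analytic matrix on the whole open unit disk. In particular $\Phi_i = 0$ for every $i < 0$, as required. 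The crux of the argument is the convergence of $H$ on the open unit disk, supplied by the $p$-adic transfer theorem; it is precisely the absence of such a justification which constituted the gap in the original argument of \cite{kedmainz}.
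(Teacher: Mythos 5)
Your overall strategy is a sensible route to the result, and the two computational steps are correct: the identity $dC/dt = 0$ for $C = U^{-1}\Phi\sigma(U)$ follows from the Frobenius equation exactly as you compute, and the peeling argument identifying the kernel of $d/dt$ on functions with $\log$-polynomial coefficients over an annulus is fine. (The paper itself gives no in-text argument for this lemma; it simply cites \cite[Proposition 17.5.1]{kedbook}, which is the result in a slightly more general form.) But there is a genuine gap in the step that you yourself single out as the crux, namely the convergence of $H$ on the full open unit disk.

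You invoke a ``standard $p$-adic transfer theorem for regular singular connections with exponents in $\mathbb{Z}_p$'' and assert that it applies ``since all exponents are $0$ and $tN$ is analytic on the open unit disk.'' Those hypotheses are not sufficient. Take the $1\times 1$ case $N = 1$: then $tN = t$ is analytic on the open unit disk, $N_{-1} = 0$ is nilpotent with exponent $0$, yet the uniquely determined $H$ with $H_0 = 1$ solving $H' = -N H$ is $e^{-t}$, whose radius of convergence is $p^{-1/(p-1)} < 1$. Every version of the transfer theorem (Christol, Dwork--Robba, Christol--Mebkhout, or the treatment in \cite{kedbook}) carries an additional hypothesis of \emph{solvability at the boundary} --- that the generic radius of convergence tends to $1$ as one approaches the outer boundary of the annulus --- and it is precisely this hypothesis that your example above violates. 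In the setting of the lemma, that hypothesis has to be extracted from the existence of the Frobenius structure $\Phi$ on an annulus of outer radius $1$: this is Dwork's classical overconvergence argument, by which a Frobenius structure propagates convergence of horizontal sections upward in radius. Your proof does not touch $\Phi$ until after $H$ is already assumed to converge, so it cannot have supplied the missing hypothesis, and as written it would ``prove'' convergence for examples like $N = 1$ where the conclusion is plainly false. Once boundary solvability is deduced from $\Phi$, the transfer theorem gives both $H$ and $H^{-1}$ analytic on the open unit disk (so that $\sigma(H)^{-1}$ in your final identity is indeed analytic there), and the rest of your argument goes through.
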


\begin{proof}
See \cite[Proposition $17.5.1$]{kedbook}.
\end{proof}

When the exponents of $N$ at $0$ are not necessarily zero, this can be generalized as follows.

\begin{lem} \label{semstabcor}
Let $N=\sum_{i=-1}^\infty N_i t^i$ be an $r \times r$ matrix such that $tN$ converges on the
open unit disk and the eigenvalues $\lambda_1,\ldots,\lambda_r$ of $N_{-1}$ are rational numbers 
with denominators coprime to $p$. Let $\Phi=\sum_{i=-\infty}^{\infty} \Phi_i t^i$ be an $r \times r$ 
matrix that converges on some open annulus of outer radius $1$. Suppose that $N,\Phi$ satisfy 
equation (\ref{frobeq}). Then $\Phi_i=0$ whenever
\begin{align*}
i < p \min_j \{\lambda_j \} - \max_j \{\lambda_j \}.
\end{align*}
\end{lem}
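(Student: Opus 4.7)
The plan is to reduce Lemma~\ref{semstabcor} to Lemma~\ref{semstab} via a diagonal gauge transformation that absorbs the exponents $\lambda_j$ into the basis. Since the $\lambda_j$ are rationals with denominators coprime to $p$, we pick a common denominator $d$ with $\gcd(d,p)=1$, work on the ramified cover $s = t^{1/d}$, and extend $\sigma$ by $\sigma(s) = s^p$, which is consistent since $\sigma(s)^d = s^{dp} = t^p = \sigma(t)$.

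After extending scalars to $\overline{\mathbb{Q}_q}$, I would choose a basis in which $N_{-1}$ is upper triangular with diagonal entries ordered so that $\lambda_1 \geq \cdots \geq \lambda_r$, and set $a_j := d\lambda_j \in \mathbb{Z}$. In the $s$-coordinate the connection matrix becomes $\tilde N = ds^{d-1}N$, and equation~(\ref{frobeq}) transforms by the chain rule into the analogous equation $\tilde N \Phi + d\Phi/ds = ps^{p-1} \Phi \sigma(\tilde N)$. The diagonal gauge transformation $w_j = s^{-a_j}v_j$ then produces
\begin{align*}
\tilde N'_{ij} = \tilde N_{ij} s^{a_i - a_j} - a_j \delta_{ij} s^{-1}, \qquad \Phi'_{ij} = s^{a_i - p a_j}\Phi_{ij}.
\end{align*}
The ordering ensures that the diagonal residue of $\tilde N'$ at $s = 0$ vanishes, and that the strictly upper-triangular part of its residue is supported only at positions $(i,j)$ with $a_i = a_j$, hence is nilpotent. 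Since multiplication by a power of $s$ preserves convergence on an annulus of outer radius $1$, the transformed data $(\tilde N', \Phi')$ would satisfy the hypotheses of Lemma~\ref{semstab} (applied in the $s$-coordinate), which then yields $\Phi'_m = 0$ for $m < 0$. Writing $\Phi'_{ij}(s) = \sum_l (\Phi_{ij})_l s^{dl + a_i - p a_j}$, this says $(\Phi_{ij})_l = 0$ whenever $dl + a_i - p a_j < 0$, i.e.\ $l < p\lambda_j - \lambda_i$; minimizing over $i, j$ gives the claimed bound $l < p\min_j\lambda_j - \max_j\lambda_j$.

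The main obstacle is verifying that the simple-pole hypothesis of Lemma~\ref{semstab} is actually preserved by this gauge transformation. The worry is with the strictly lower-triangular entries of $\tilde N'$: although $(N_{-1})_{ij} = 0$ for $i > j$, the higher-order power-series coefficients of $N_{ij}$ can combine with the negative power $s^{a_i - a_j}$ (negative because our ordering forces $a_i \leq a_j$ when $i > j$) to produce poles of order greater than one whenever the eigenvalue gap $\lambda_j - \lambda_i$ exceeds $1$. A natural remedy is to first perform a preliminary change of basis, constructed recursively order by order as $U = I + \sum_{k \geq 1} U_k t^k$, that puts $N$ (not merely $N_{-1}$) into upper triangular form; the cohomological obstructions to solving at each stage involve the quantities $\lambda_i - \lambda_j - k$, and $p$-adic convergence of the resulting series is controlled by the hypothesis $\lambda_j \in \mathbb{Z}_p$. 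Once $N$ is fully upper triangular, so is $\tilde N'$, the lower-triangular issue disappears, and the diagonal gauge-transformation argument above goes through verbatim.
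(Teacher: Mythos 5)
You have correctly identified the overall plan of the paper's proof (ramify to integer exponents, gauge-transform to put the residue into nilpotent form, apply Lemma~\ref{semstab}, transform back), and you have also correctly identified the crux: the naive diagonal gauge transformation $w_j = s^{-a_j}v_j$ does not preserve the simple-pole condition in the lower-triangular part when the eigenvalue gaps exceed $1$. However, the remedy you propose in the final paragraph does not work, and this is precisely where the genuine mathematical content of the lemma lies.

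Your recursive triangularization $U = I + \sum_{k\geq 1} U_k t^k$ requires solving, at each step, an equation of the form $\bigl(\operatorname{ad}(N_{-1}) + k\cdot\mathrm{id}\bigr)(U_k) = (\text{known})$, and the eigenvalue of this operator on the $(i,j)$-entry is $\lambda_i - \lambda_j + k$ (you wrote $\lambda_i - \lambda_j - k$, but either way). Since after ramification the $\lambda_j$ have been scaled to integers, and since you must order them $\lambda_1 \geq \cdots \geq \lambda_r$ precisely so that the subsequent diagonal gauge transformation does not worsen the upper-triangular part, the strictly lower-triangular positions $(i,j)$ with $i>j$ have $\lambda_j - \lambda_i$ a nonnegative integer, and the operator is singular exactly at $k = \lambda_j - \lambda_i \geq 1$. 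These resonances are not exceptional: they occur whenever the exponents differ by a positive integer, which is the generic interesting situation. Reversing the ordering makes the triangularization work but then the diagonal gauge transformation blows up the upper-triangular part instead, so you cannot have both. The hypothesis $\lambda_j \in \mathbb{Z}_p$ governs $p$-adic convergence, not the vanishing of these obstructions, so invoking it does not save the argument.

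The paper resolves exactly this difficulty by appealing to shearing transformations (via \cite[Lemma~5.1.6]{kedmainz}): rather than triangularizing $N$ and then applying a single monomial diagonal gauge, one builds $W$ as a product of elementary transformations that shift one exponent at a time by $1$, absorbing the resonant off-diagonal contributions into the lower-order terms of $N$ as one goes. The key output of that lemma is that the resulting $W$ over $\mathbb{Q}_q(t)$ satisfies the pole bounds $\operatorname{ord}_0(W) \geq -b$ and $\operatorname{ord}_0(W^{-1}) \geq a$ with $a = \min_j\lambda_j$, $b = \max_j\lambda_j$, which feed into the final estimate $i < pa - b$ exactly as in your last displayed computation. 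So your concluding bookkeeping is fine, but the construction of $W$ that it depends on is missing; you need shearing transformations, not a straight recursive triangularization.
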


\begin{proof}
First we may adjoin $t^{1/k}$ for $k$ coprime to $p$ (if necessary), to reduce to the case where
$\lambda_1,\ldots,\lambda_r \in \mathbb{Z}$. In that case, by applying so called \emph{shearing transformations}, 
one can find an invertible $r \times r$ matrix $W$ over $\mathbb{Q}_q(t)$ such that the matrix
\begin{align*}
N'=W^{-1} N W +  W^{-1} \frac{dW}{dt}
\end{align*}
still has (at most) a simple pole at $t=0$, but now with all exponents equal to $0$. Moreover, one 
can ensure that $t^b W$ and $t^{-a} W^{-1}$ do not have a pole at $t=0$, for $a=\min_j \{ \lambda_j\}$
and $b=\max_j \{ \lambda_j \}$. More details on this can be found in \cite[Lemma~5.1.6]{kedmainz}. If we 
change basis to the basis given by the colums of $W$, then
\begin{align*}
N &\rightarrow N', \\
\Phi &\rightarrow \Phi' =  W^{-1} \Phi \sigma(W).
\end{align*}  
Now Lemma \ref{semstab} can be applied to the pair $N',\Phi'$, so that $\Phi'_i=0$ for all $i<0$.
Since $\Phi=  W \Phi' \sigma(W^{-1})$, this implies that $\Phi_i=0$ for all 
$i <p a - b$. 
\end{proof}

Recall that we have chosen the standard $p$-th power Frobenius lift $\sigma$. However, we could just as 
well have chosen a different lift. The following lemma allows one to change from one Frobenius lift to another.

\begin{lem} \label{transf}
Let $\mathcal{D}$ denote the differential operator on $\mathcal{E}$ defined by $\nabla v = \mathcal{D} v \otimes dt$. 
Suppose that $\mathcal{E}$ admits a Frobenius structure $\mathcal{F}_1:\sigma_1^* \mathcal{E} \isomto \mathcal{E}$ with respect
to a Frobenius lift $\sigma_1$, and let $\sigma_2$ be some other Frobenius lift. Then $\mathcal{E}$ also admits a 
Frobenius structure $\mathcal{F}_2:\sigma_2^* \mathcal{E} \isomto \mathcal{E}$ with respect to $\sigma_2$, defined by
\begin{align*}
\mathcal{F}_2(v)=\sum_{i=0}^{\infty} (\sigma_2(t)-\sigma_1(t))^i \mathcal{F}_1\left(\frac{\mathcal{D}^i}{i!}(v)\right).
\end{align*}
\end{lem}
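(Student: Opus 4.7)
My plan is to prove the lemma in four steps: convergence of the series defining $\mathcal{F}_2$, verification of $\sigma_2$-semilinearity, horizontality with respect to $\nabla$, and the isomorphism property. The starting observation is that since both $\sigma_1$ and $\sigma_2$ lift the $p$-th power Frobenius on $\mathbb{P}^1_{\mathbb{F}_q}$, the function $u(t) := \sigma_2(t) - \sigma_1(t)$ is divisible by $p$, so $|u|$ is bounded by $p^{-1}$ on $V$, and by continuity on some strict neighbourhood of $V$. Combined with standard estimates on the growth of the matrix entries of $\mathcal{D}^i/i!$ as $i \to \infty$ (governed by the convergence of $N$), this gives convergence of the series on a suitable strict neighbourhood of $V$.

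The main algebraic step is $\sigma_2$-semilinearity. For $f \in \mathcal{O}^{\dagger}(U)$, I will apply the iterated Leibniz rule
\begin{align*}
\frac{\mathcal{D}^i(fv)}{i!} = \sum_{j=0}^i \frac{1}{j!} \frac{d^j f}{dt^j} \cdot \frac{\mathcal{D}^{i-j}(v)}{(i-j)!},
\end{align*}
use the $\sigma_1$-semilinearity of $\mathcal{F}_1$, and reindex with $k = i - j$ to obtain
\begin{align*}
\mathcal{F}_2(fv) = \left( \sum_{j=0}^\infty u^j \sigma_1\left( \frac{1}{j!} \frac{d^j f}{dt^j} \right) \right) \mathcal{F}_2(v).
\end{align*}
The parenthesized factor is precisely the Taylor expansion of $f$ about $\sigma_1(t)$, evaluated at $\sigma_2(t) = \sigma_1(t) + u$; since both Frobenius lifts act identically on $\mathbb{Q}_q$-coefficients, this sum collapses to $\sigma_2(f)$. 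Identifying this resummed Taylor series with $\sigma_2(f)$ is the heart of the proof, and is exactly where the hypothesis that $\sigma_1,\sigma_2$ are lifts of the same map on the special fibre is used.

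Horizontality amounts to the operator identity $\mathcal{D} \circ \mathcal{F}_2 = \sigma_2'(t)\, \mathcal{F}_2 \circ \mathcal{D}$. Differentiating the defining series termwise, applying horizontality of $\mathcal{F}_1$ in the form $\mathcal{D} \circ \mathcal{F}_1 = \sigma_1'(t)\, \mathcal{F}_1 \circ \mathcal{D}$, and using $u'(t) = \sigma_2'(t) - \sigma_1'(t)$, I expect the two resulting sums to recombine cleanly into $\sigma_2'(t)\, \mathcal{F}_2(\mathcal{D} v)$ after an index shift. Finally, to show that $\mathcal{F}_2$ is an isomorphism, I will observe that in matrix form the expansion reads $\Phi_2 = \Phi_1 A$ where $A \equiv I \pmod{u}$; since $|u| < 1$ on a strict neighbourhood of $V$, the matrix $A$ is invertible by a geometric series argument, and hence so is $\mathcal{F}_2$. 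The convergence bookkeeping and the Taylor resummation are the only nontrivial points; the remaining verifications are largely formal.
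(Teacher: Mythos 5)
The paper's own ``proof'' of Lemma~\ref{transf} is simply a pointer to \cite[Proposition~17.3.1]{kedbook}, so there is no in-text argument to compare against directly; your proposal is an attempt to write out the standard Taylor-transport argument, and its algebraic core is correct. The $\sigma_2$-semilinearity via the iterated Leibniz rule plus the resummation $\sum_j u^j \sigma_1(f^{(j)}/j!) = \sigma_2(f)$ is exactly right (this is where the hypothesis ``both lift Frobenius'' enters, since it makes $\sigma_2(t)=\sigma_1(t)+u$ with $|u|\le p^{-1}$). The horizontality computation $\mathcal{D}\circ\mathcal{F}_2 = \sigma_2'(t)\,\mathcal{F}_2\circ\mathcal{D}$ does recombine correctly after the index shift $j=i-1$, since $u'=\sigma_2'-\sigma_1'$. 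The invertibility argument from $\Phi_2=\Phi_1\left(I+u(\cdots)\right)$ is fine.

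Where you are underselling the difficulty is convergence. You assert that the growth of $\Delta^{(i)}=\mathcal{D}^i/i!$ is ``governed by the convergence of $N$,'' and that together with $|u|\le p^{-1}$ this gives convergence. That naive bookkeeping gives $v_p(u^i\Delta^{(i)}) \gtrsim i + i\,v_p(N) - v_p(i!)$, which tends to $+\infty$ only when $v_p(N)\geq 0$ (and even then only marginally for $p=2$, via the digit-sum identity for $v_p(i!)$). The theorem in this paper explicitly allows $v_p(N)<0$ --- that is why the constant $c$ is introduced --- and in that regime the crude estimate from $N$ alone does not give convergence at radius $p^{-1}$. The actual input you need is that the \emph{existence of $\mathcal{F}_1$} forces the Taylor series of horizontal sections at a generic point to converge on the full open unit disk, with the logarithmic growth bound on $\Delta^{(i)}$ that the paper records in Lemma~\ref{lemDi/i!} (citing \cite[Theorem~18.3.3]{kedbook}). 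In other words, the convergence of the series defining $\mathcal{F}_2$ is not ``largely formal'': it is precisely the $p$-adic transfer/solvability theorem, and it uses the Frobenius structure hypothesis essentially, not just $N$. This is the gap; the rest of the proposal is sound.
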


\begin{proof}
See \cite[Proposition $17.3.1$]{kedbook}.
\end{proof}

Finally, we need a bound on the matrices of the differential operators $\frac{\mathcal{D}^i}{i!}$ that appear in Lemma \ref{transf}.

\begin{lem} \label{lemDi/i!}
Let $\Delta^{(i)}$ be the matrix of the differential operator $\frac{\mathcal{D}^i}{i!}$ with respect to the basis $[v_1,\ldots,v_r]$ that is,
\[
\left( \frac{\mathcal{D}^i}{i!} \right) v_k = \sum_{j=1}^r \Delta^{(i)}_{jk} v_j.
\]
Then we have
\begin{align*}
v_p(\Delta^{(i)}) \geq f(i),
\end{align*}
where $f(i)$ is defined as in Theorem \ref{frobexp}.
\end{lem}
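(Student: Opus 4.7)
The plan is to exploit horizontality of $\mathcal{F}$, which in operator form reads $\mathcal{D}\mathcal{F} = \sigma'(t)\,\mathcal{F}\mathcal{D}$ (a restatement of equation~(\ref{frobeq})), and to iterate it so as to reduce the effective order of $\mathcal{D}$ by a factor of $p$ at a cost of only $v_p(\Phi)+v_p(\Phi^{-1})$ per step.

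The first step would be to establish, by induction on $i$ using the Leibniz rule for $\mathcal{D}$, the operator identity
\[
\mathcal{D}^{i}\mathcal{F} = \sum_{j=1}^{i} p_{i,j}(t)\,\mathcal{F}\,\mathcal{D}^{j},
\]
where the $p_{i,j}(t)$ obey the recursion $p_{i+1,j}=p_{i,j}'+\sigma'(t)\,p_{i,j-1}$, with $p_{i,i}(t)=\sigma'(t)^{i}$ and $p_{i,j}=0$ for $j>i$ or $j<1$ (when $i\geq 1$). Because $\sigma(t)=t^{p}$, the derivatives $\sigma^{(k)}(t)$ for $1\leq k\leq p$ are divisible by $p$ and vanish for $k>p$; a short induction then shows $v_p(p_{i,j}(t))\geq j$, up to contributions from $v_p(t)$. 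Translating to matrix form with $L=d/dt+N$ yields the upper triangular system
\[
\Phi^{-1}L^{i}\Phi = \sum_{j=1}^{i} p_{i,j}(t)\,j!\,\sigma(\Delta^{(j)})
\]
in the unknowns $\sigma(\Delta^{(j)})$.

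I would then run the bound as an induction organised on $\lceil \log_p i\rceil$ rather than on $i$ itself. Using $v_p(\sigma(X))=v_p(X)$ and isolating $\sigma(\Delta^{(i)})$ at each step, the system allows one to control $v_p(\Delta^{(i)})$ by $v_p(\Delta^{(j)})$ with $j\leq\lfloor i/p\rfloor$, losing exactly $v_p(\Phi)+v_p(\Phi^{-1})$ per step once the divisibility built into the $p_{i,j}$ is used to offset the denominator $\sigma'(t)^{i}i!$. Iterating $\lceil \log_p i\rceil$ times down to $\Delta^{(0)}=I$ produces the first term of $f(i)$. Stopping one step earlier leaves a matrix $\Delta^{(j)}$ with $j<p$, which is a polynomial in $N$ and its derivatives; applying Cayley--Hamilton to reduce all powers of $N$ modulo its characteristic polynomial to degree at most $r-1$ contributes the extra $(r-1)v_p(N)$ and gives the second term of $f(i)$.

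The main obstacle is verifying the crucial cancellation: upon isolating $\sigma(\Delta^{(i)})$, the naive inversion carries the factor $\sigma'(t)^{i}i!=p^{i}t^{i(p-1)}i!$ in the denominator, which would destroy the logarithmic rate. Showing that the numerator is divisible by this amount requires combining the estimate $v_p(p_{i,j}(t))\geq j$ with a careful analysis of the Fa\`a di Bruno-style combinatorics of the $p_{i,j}$, and tracking the $v_p(t)$ contributions that depend on the geometry of $V$ (in particular, on whether $0$ or $\infty$ lies in $Z$).
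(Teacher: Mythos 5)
The paper's proof does not argue at the operator level as you propose. It forms the Taylor series $T(-t+\eta,v_j)=\sum_i(-t+\eta)^i\,\mathcal{D}^i(v_j)/i!$ of horizontal sections at a generic point $\eta$ of the residue disk, cites \cite[Theorem 18.3.3]{kedbook} and \cite[Remark 18.3.4]{kedbook} for the bounds on the Taylor coefficients $M_i$ of the resulting fundamental matrix (these are precisely the Dwork--Frobenius and Dwork--Robba logarithmic growth bounds for $p$-adic differential equations carrying a Frobenius structure), and then reads off the bound on $\Delta^{(i)}$ from the triangular relation $M_i=(-1)^i\Delta^{(i)}(\eta)+(\text{terms from }\Delta^{(j)},\,j<i)$ by induction on $i$. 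Your plan amounts to re-deriving the cited growth bound from scratch rather than invoking it.

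Two concrete gaps block your route. First, the system you write is not actually closed in the $\sigma(\Delta^{(j)})$: with $L=d/dt+N$ one has the mixed Leibniz rule $L(XY)=(LX)Y+X\,Y'$, hence $L^i(\Phi)=\sum_{k}\binom{i}{k}\,k!\,\Delta^{(k)}\Phi^{(i-k)}$, so the left-hand side of your identity involves higher derivatives of $\Phi$ that need their own bounds. Second, and more seriously, when you solve for $\sigma(\Delta^{(i)})$ the recursion involves \emph{all} $j<i$, not just $j\le\lfloor i/p\rfloor$; the claim that ``the system allows one to control $v_p(\Delta^{(i)})$ by $v_p(\Delta^{(j)})$ with $j\le\lfloor i/p\rfloor$'' is precisely the content of the Dwork--Frobenius bound and does not follow from $v_p(p_{i,j})\ge j$ together with the inversion of a triangular system. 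You correctly identify this as the ``main obstacle,'' but the proposed remedy (Fa\`a di Bruno combinatorics together with tracking of $v_p(t)$) is not carried out and is exactly where the proof of \cite[Theorem 18.3.3]{kedbook} does its work. The Cayley--Hamilton step meant to produce the $(r-1)v_p(N)$ term is likewise asserted without argument. Without these pieces the proposal does not establish $v_p(\Delta^{(i)})\ge f(i)$.
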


\begin{proof}
Let $\eta$ denote a generic point of the disk of radius $1$ around $z$. One can verify that the Taylor series
\begin{align*}
T(-t+\eta,v_j)= \sum_{i=0}^{\infty} (-t+\eta)^i \frac{\mathcal{D}^i(v_j)}{i!}
\end{align*}
defines a horizontal section of $\nabla$ that meets $v_j$ at the point $\eta$. Form the matrix $M$ whose $j$-th column 
consists of the expression of $T(-t+\eta,v_j)$ in terms of the basis $[v_1,\dots,v_r]$, then expand 
$M = \sum_{i=0}^\infty M_i (t - \eta)^i$. Since $\nabla$ does not have any singularities in the open disk of radius 
$1$ around $\eta$, it follows from \cite[Theorem 18.3.3]{kedbook} that  
\begin{align*}
\min\{v_p(M_0), \dots, v_p(M_i)\} &\geq (v_p(\Phi)+v_p(\Phi^{-1})) \lceil \log_p(i) \rceil, \\
\intertext{and from \cite[Remark 18.3.4]{kedbook} (with $q=p$) that}
\min\{v_p(M_0), \dots, v_p(M_i)\} &\geq (r-1)v_p(N) + (v_p(\Phi)+v_p(\Phi^{-1})) \lfloor \log_p(i) \rfloor. 
\end{align*}
Since $|\Delta^{(i)}|$ attains its maximum at $\eta$, and
\[
M_i = (-1)^i \Delta^{(i)} (\eta) +  \left( \mbox{ terms coming from } \Delta^{(j)} \mbox{ with } j<i \right),
\]
we deduce the bound by induction on $i$.
\end{proof}

Now we finally get to the proof of Theorem \ref{frobexp}.

\begin{proof}[Proof of Theorem \ref{frobexp}] 
We first note that in case $z=0$ or $z= \infty$, the claim is clear from Lemma \ref{semstabcor}.

Suppose next that $z$ is a point at which $N$ has no pole, so that $\alpha_1 = \alpha_2 = 0$.
If we use the Frobenius lift $\sigma'$ with $\sigma'(t-z)=(t-z)^p$, then by Lemma \ref{semstabcor} again
(applied after translating $z$ to the origin),
the Frobenius matrix $\Phi'$ with respect to $\sigma'$ is holomorphic at $z$.
By Lemma \ref{transf} (with $\sigma_1=\sigma', \sigma_2=\sigma$), $\Phi$ is also holomorphic at $z$,
proving the claim in this case.

Finally, suppose that $N$ does have a pole at $z$.
In this case, Lemma \ref{semstabcor} implies that $\Phi'$ has order at least $-\alpha_1$ at $z$. 
We may again use Lemma \ref{transf} (with $\sigma_1=\sigma', \sigma_2=\sigma$) 
to convert back to the original Frobenius lift; this gives us the identity
\[
\Phi = \sum_{i=0}^\infty p^i u^i \Phi' \sigma'(\Delta^{(i)}),
\]
where $pu = (t-z)^p + \sigma(z) - t^p$ (with $v_p(u) \geq 0$), and $\Delta^{(i)}$ again denotes the matrix of the differential 
operator $\frac{\mathcal{D}^i}{i!}$ with respect to the basis $[v_1,\ldots,v_r]$.
In this identity, the summand at index $i$ has order at least $-\alpha_1-pi$ at $z$, 
and $p$-adic valuation at least $i + v_p(\Phi') + f(i)$ 
by Lemma \ref{lemDi/i!}. This will give the desired bound once we check that $v_p(\Phi') \geq v_p(\Phi)+c$. 
To see this, apply Lemma \ref{transf} with $\sigma_1$ and $\sigma_2$ interchanged to obtain
\[
\Phi' = \sum_{i=0}^\infty \frac{p^i}{i!} (-u)^i \Phi \sigma(i! \Delta^{(i)}).
\]
If $v_p(N) < 0$, we get the claim by invoking Lemma \ref{lemDi/i!} again; if $v_p(N) \geq 0$,
we instead note that  $v_p(p^i/i!)$ and $v_p(i! \Delta^{(i)})$ are both nonnegative.
\end{proof}

The following corollary is often useful when the matrix $N$ of $\nabla$ with
respect to some basis does not have a simple pole at $z$.

\begin{cor}
Suppose that $[v_1,\ldots,v_r]$ is a basis for $\mathcal{E}$ as in Theorem \ref{frobexp}, and let  $[w_1,\ldots,w_r]$ 
be another basis for $\mathcal{E}$, such that $v_j = \sum_{i=1}^r W_{ij} w_j$ with $W \in M_r(\mathbb{Q}_q(t))$. Then  
the matrix $\Phi'$ of $\mathcal{F}$ with respect to $[w_1,\ldots,w_r]$ is congruent modulo $p^{m+v_p(W)+v_p(W^{-1})}$ to a matrix 
of rational functions of order greater than or equal to 
\[
-(\alpha_1+p \alpha_2(m))+\mathrm{ord}_z(W) + p \; \mathrm{ord}_z(W^{-1})
\]
at z.
\end{cor}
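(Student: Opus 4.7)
The plan is to combine Theorem \ref{frobexp}, applied to the basis $[v_1,\ldots,v_r]$ which by hypothesis satisfies its assumptions, with the transformation law for the Frobenius matrix under a $\mathbb{Q}_q(t)$-linear change of basis. The $p$-adic accounting will be essentially automatic from the submultiplicativity of $v_p$ and the fact that $\sigma$ is an isometry on $\mathcal{O}^\dagger(U)$; the main effort will go into the order estimate at $z$.

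First I would derive the change-of-basis formula. Starting from $v_j = \sum_i W_{ij} w_i$ and using the $\sigma$-semilinearity of $\mathcal{F}$, expanding $\mathcal{F}(v_j)$ in the two bases and comparing coefficients yields the identity $W\Phi = \Phi' \sigma(W)$, hence $\Phi' = W \Phi \, \sigma(W^{-1})$. Next, I would invoke Theorem \ref{frobexp} to decompose $\Phi = P + E$, where $P$ is a matrix of rational functions with $\mathrm{ord}_z(P) \geq -(\alpha_1 + p \alpha_2(m))$ and $v_p(E) \geq m$. Substituting gives $\Phi' = W P \sigma(W^{-1}) + W E \sigma(W^{-1})$; the first summand is the desired rational approximant, and the error satisfies $v_p(W E \sigma(W^{-1})) \geq v_p(W) + m + v_p(W^{-1})$, since $\sigma$ preserves $v_p$. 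This yields the claimed congruence modulo $p^{m + v_p(W) + v_p(W^{-1})}$.

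The main obstacle is the order estimate for the approximant $W P \sigma(W^{-1})$. By submultiplicativity of $\mathrm{ord}_z$ on products of rational functions, it suffices to prove an inequality of the form $\mathrm{ord}_z(\sigma(W^{-1})) \geq p \, \mathrm{ord}_z(W^{-1})$. Although $\sigma$ is the global Frobenius lift $t \mapsto t^p$ rather than one centred at $z$, its effect on orders in the residue disc of $z$ is nonetheless a scaling by $p$: the polynomial $\sigma(t-z) = t^p - \sigma(z)$ has all $p$ of its roots in that residue disc, since $\sigma(z) \equiv z^p \pmod p$, and so applying $\sigma$ to a factor $(t-z)^k$ multiplies its order contribution by $p$. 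Combining the three order contributions then gives the claimed lower bound $-(\alpha_1 + p\alpha_2(m)) + \mathrm{ord}_z(W) + p\,\mathrm{ord}_z(W^{-1})$, completing the proof.
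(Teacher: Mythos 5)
Your route matches the paper's, which simply records the change-of-basis identity $\Phi' = W\Phi\,\sigma(W)^{-1}$ and leaves the rest implicit; the decomposition $\Phi = P + E$ with $\mathrm{ord}_z(P) \geq -(\alpha_1 + p\alpha_2(m))$ and $v_p(E) \geq m$, and the $p$-adic accounting for the error term, are the right way to finish.

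The one step that is off is the order estimate. You assert $\mathrm{ord}_z(\sigma(W^{-1})) \geq p\,\mathrm{ord}_z(W^{-1})$ on the grounds that $\sigma(t-z) = t^p - \sigma(z)$ has all $p$ of its roots inside the residue disc of $z$, so that ``applying $\sigma$ to $(t-z)^k$ multiplies its order contribution by $p$.'' But those $p$ roots are the $p$-th roots of $\sigma(z)$, which in general do not include $z$ itself; and even when $z$ is a Teichm\"uller point (so $z^p = \sigma(z)$), the factorization $t^p - z^p = (t-z)\prod_{\zeta^p=1,\,\zeta\neq 1}(t-\zeta z)$ shows $z$ is a \emph{simple} root, not a $p$-fold one. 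Hence $\mathrm{ord}_z\bigl(\sigma(W^{-1})\bigr)$ equals $\mathrm{ord}_z(W^{-1})$ in the Teichm\"uller case and is generically $0$ otherwise; it is never $p\,\mathrm{ord}_z(W^{-1})$. The inequality you need does hold whenever $\mathrm{ord}_z(W^{-1}) \leq 0$ (which is the case of interest), but for the weaker reason that $\mathrm{ord}_z(\sigma(W^{-1})) \geq \min\{0,\,\mathrm{ord}_z(W^{-1})\} \geq p\,\mathrm{ord}_z(W^{-1})$; the factor $p$ in the corollary's bound is slack, not an exact scaling. For $\mathrm{ord}_z(W^{-1}) > 0$ the inequality fails outright. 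The intuition that $\sigma$ multiplies the order at $z$ by $p$ is correct only for the Frobenius lift $\sigma'$ centered at $z$, which satisfies $\sigma'(t-z)=(t-z)^p$ and is the one used inside the proof of Theorem~\ref{frobexp}, not the standard $\sigma$ appearing in this corollary.
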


\begin{proof}
The matrix $\Phi'$ satisfies
\[
\Phi'=W \Phi \sigma(W)^{-1}.
\]
\end{proof}

\begin{rem} \label{improvement}
In some special cases Theorem \ref{frobexp} can still be improved a little.
\begin{enumerate} 
\item If $\sigma(z)=z^p$ (such a $z$ is called a \emph{Teichm\"uller lift}), then $\sigma(t)-\sigma'(t)$ is
divisible by $t-z$ in the proof of Theorem \ref{frobexp}. So when we apply Lemma \ref{transf}, 
some cancellation occurs, and modulo $p^m$ the matrix $\Phi$ has order greater than or equal to
$-(\alpha_1+(p-1)\alpha_2(m))$ at $z$.

\item  Suppose that $z \neq 0,\infty$. If we denote the residue matrix $(t-z)N|_{t=z}$ of $N$ at $z$ by $R_z$, and the identity
matrix by $I$, then the leading term in the Laurent series expansion of the matrix $\Delta^{(i)}$ of
$\frac{\mathcal{D}^i}{i!}$ at $(t-z)$ is given by
\[
(R_z-(i-1)I) \ldots (R_z-I) R_z \; \frac{(t-z)^{-i}}{i!}.
\]
In many cases 
there exists $S \in GL_r(\mathbb{Q}_q)$ such that $S^{-1} R_z S$ is diagonal. Writing $\lambda_1,\ldots,\lambda_r$ for the 
entries on the diagonal, the leading term can then be written as
\[
S 
\begin{pmatrix}
\frac{\lambda_1(\lambda_1-1) \ldots (\lambda_1-(i-1))}{i!} &  & 0 \\
 &  \ddots &  \\
0 &  & \frac{\lambda_r(\lambda_r-1) \ldots (\lambda_r-(i-1))}{i!}
\end{pmatrix}
S^{-1}(t-z)^{-i}. 
\]
However, since $\lambda_1,\ldots,\lambda_r \in \mathbb{Q} \cap \mathbb{Z}_p$ by assumption, 
the matrix in the middle is easily seen to have entries in $\mathbb{Q} \cap \mathbb{Z}_p$ as
well. 
This means that the valuation of the leading term in the the Laurent series expansion
of $\Delta^{(i)}$ at $z$ is bounded by $v_p(S)+v_p(S^{-1})$. Now when we apply Lemma
\ref{transf}, we see that if $g(m)+v_p(\Phi)+c+v_p(S)+v_p(S^{-1}) \geq m$, then modulo 
$p^m$ the matrix $\Phi$ has order greater than or equal to $-(\alpha_1+p(\alpha_2(m)-1))$ at $z$. 
This is related to the improvement upon \cite[Theorem~18.2.1]{kedbook} given by the theorem of
Dwork and Robba on which it is based \cite{dwork-robba}.
\end{enumerate}
\end{rem}

\section{An example: a family of elliptic curves}

We consider the family given by the affine equation
\begin{align*}
y^2 = x^3+1+(t+1)(x^2+x),
\end{align*}
The closure of the zero locus of this equation in 
$\mathbb{P}^2_{\mathbb{Q}} \times \mathbb{P}_{\mathbb{Q}}^1$ 
defines a family $X/U$ of elliptic curves over 
$U=\mathbb{P}^1_{\mathbb{Q}}-\{-2,2\}$.

The relative algebraic de Rham cohomology $H^1_{dR}(X/U)$ is a vector bundle on $U$, 
and carries a natural Gauss-Manin connection $\nabla$. Moreover, $H^1_{dR}(X/U)$ is of rank $2$, 
and a basis is given by
\[
\left[ \frac{dx}{y},\frac{xdx}{y} \right].
\]
Let $p$ be an odd prime number. The space $H^1_{dR}(X/U)$ equipped with $\nabla$ coincides with the relative rigid 
cohomology $H^1_{rig}(X_p/U_p)$ of the reduction $X_p/U_p$ of $X/U$ modulo $p$, and therefore it admits a Frobenius 
structure $\mathcal{F}$ with respect to the standard lift $\sigma$ of the $p$-th power Frobenius. Let $N$ and $\Phi$
be the matrices of $\nabla$ and $\mathcal{F}$ with respect to the above basis, respectively. We compute

\[
N = \frac{1}{t^2-4}
\begin{pmatrix} 
-\frac{t}{2}-\frac{1}{2} & \frac{t}{2}+\frac{3}{2} \\
-\frac{1}{2} & \frac{t}{2}+\frac{1}{2}
\end{pmatrix}. 
\]

It is known that $v_p(\Phi)=0$, and $v_p(\Phi^{-1})=-1$ in this case, and clearly $v_p(N)=0$, 
so in Theorem \ref{frobexp} we have
\[
g(m)=\max \{i \in \mathbb{N} \; | \; i - \lfloor \log_p(i) \rfloor  < m \}.
\]

\subsection{$z=2$}

At $z=2$ the exponents are $\{-1/4,1/4\}$. The residue matrix $R_2$ is diagonalizable by 
integral matrices for $p \neq 5$. So by remark \ref{improvement}, for $p \neq 5$ the bound 
from Theorem \ref{frobexp} for the order of $\Phi$ modulo $p^m$ can be improved to 
$-(\lfloor \frac{p+1}{4} \rfloor+p(g(m)-1))$, while for $p=5$ it remains $-(1+5g(m))$. \\

Experimentally, we find that for $p=3$ the order is bounded by $1-3(m-1)$, for $p=5$ it is 
bounded by $1-5(m-1)$, and for $p=7$ it is bounded by $2-7(m-1)$, all for $m$ up to $250$ and 
with equality for many $m$. 

\subsection{$z=-2$}

At $z=-2$ the exponents are $\{0,0\}$, so the bound from Theorem \ref{frobexp} for the order of 
$\Phi$ modulo $p^m$ is given by $-pg(m)$. \\

Experimentally, we find that for $p=3$ the bound is sharp for $m=1,2,3,6,8,17,\\25,52,78,159,239$, 
for $p=5$ for $m=1,2,3,4,5,10,15,20,24,49,74,99,123,248$, and for 
$p=7$ for $m=1,2,3,4,5,6,7,14,21,28,35,42,48,97,146,195,244$, all for $m$ up to $250$.
 
\section*{Acknowledgments}

Kedlaya was supported by NSF (CAREER grant DMS-0545904, grant DMS-1101343), 
DARPA (grant HR0011-09-1-0048),
MIT (NEC Fund, Cecil and Ida Green professorship), 
and UCSD (Stefan E. Warschawski professorship).
Tuitman was supported by the European Research Council (grant 204083).

\bibliography{effective}

\end{document}